\begin{document}

\title{Diffusive limits on the Penrose tiling}
\author{A. Telcs \\
Magyar tud\'{o}sok k\"{o}r\'{u}tja 2, Budapest, Hungary, H-1117}
\maketitle

\begin{abstract}
In this paper random walks on the Penrose lattice are investigated. \ Heat
kernel estimates and the invariance principle are shown.
\end{abstract}


\section{Introduction}

The Penrose tiling \cite{GS,Sen} is the most famous nonperiodic tiling of
the plane and is an unfailing source of beautiful properties and phenomena
to be explored \cite{A}. There is associated with the Penrose tiling by the
usual duality relation a lattice that may be called the Penrose lattice.
Kunz \cite{K} discussed conditions under which simple nearest-neighbor
random walk on the Penrose lattice would conform to a version of the central
limit theorem, but did not completely resolve the issue. In the present
paper, it is proved that random walk on the Penrose lattice satisfies the
invariance principle (in annealed sense, see detailed explanation in the
next section) that with appropriate scalings, the random walk process
converges weakly to a non-degenerate rotation-invariant Brownian motion. Sz%
\'{a}sz \cite{Sz} has conjectured that a related Lorentz scatter system also
satisfies this principle. The proof of the discrete, random walk counterpart
of the conjecture, also formulated by Sz\'{a}sz, is the main result of the
paper.

Central to the approach followed here is the notion of roughly isometric
weighted graphs, defined precisely later, which share diffusion properties
(cf. \cite{HK}). This enables us to relate the simple random walk on the
Penrose lattice to translation invariant walks on the plane square lattice $%
\mathbb{Z}^{2}$. Let $d(x,y)$ denote the graphical distance (number of edges
in the shortest path) between vertices $x$ and $y$. Many random walk
processes obey two-sided Gaussian heat kernel estimate $\left( GE_{\alpha
,2}\right) $%
\begin{equation}
\frac{c}{n^{\alpha /2}}\exp \left( -\frac{Cd(x,y)^{2}}{n}\right) \leq 
\widetilde{p}_{n}\left( x,y\right) \leq \frac{C}{n^{\alpha /2}}\exp \left( -%
\frac{cd(x,y)^{2}}{n}\right)  \label{GEaa2}
\end{equation}%
for $0<n<d(x,y)$, with $\alpha \geq 1$, $c>0$ and $C>0$. Here $\widetilde{%
p_{n}}=p_{n}+p_{n+1}$, where $p_{n}(x,y)$ is the probability that a walker
departing from vertex $x$ is found at vertex $y$ after $n$ steps. Delmotte 
\cite{D} has shown that the bounds $\left( GE_{2}\right) $: 
\begin{equation}
\frac{c}{V\left( x,\sqrt{n}\right) }\exp \left( -\frac{Cd(x,y)^{2}}{n}%
\right) \leq \widetilde{p}_{n}\left( x,y\right) \leq \frac{C}{V\left( x,%
\sqrt{n}\right) }\exp \left( -\frac{cd(x,y)^{2}}{n}\right) ,  \label{GE2}
\end{equation}%
(where $V\left( x,r\right) $ denotes the volume of geodesic balls in graph
distance) are stable under rough isometry for random walks on weighted
graphs (which we define precisely later). In other words if two graphs are
roughly isometric and $\left( \ref{GE2}\right) $ holds for one then it holds
for the other as well. \ We know that $\left( \ref{GE2}\right) $ holds for
the simple symmetric random walk on $\mathbb{Z}^{2}$ ( $V\left( x,r\right)
\simeq r^{2},$ i.e. satisfy $\left( \ref{GEaa2}\right) $ with $\alpha =2$)
and consequently holds for the Penrose graph if it that graph is roughly
isometric to $\mathbb{Z}^{2}$. \ A very short, direct proof will be given of
rough isometry between the Penrose graph and $\mathbb{Z}^{2}$. \ 

Unfortunately the exponents in the upper and lower estimates in $\left( \ref%
{GEaa2}\text{ and }\ref{GE2}\right) $ contain different constants which
reflects local inhomogeneities. \ The rough isometry invariance and the
Gaussian estimate $\left( \ref{GE2}\right) $ have been proved along a series
of estimates in which some cumulation of constants is unavoidable.
Consequently if we are looking for the central limit theorem or for the
invariance principle we need a different approach.

In the field of stochastic processes and statistical physics a powerful
method has been developed to investigate random walks in random environment,
on percolation clusters and interacting particle systems. \ A key result in
this direction is the celebrated paper by Kipnis and Varadhan \cite{KV} and
its influential extension by De Masi, Ferrari, Goldstein, Wick \cite{MFGW}
in which annealed central limit theorem and invariance principle is shown,
(the initial environment is averaged with respect to an invariant measure),
(for further details see Section \ref{s3} and \cite{SS} on convergence
notions in random environments). \ That result provides us immediate
derivation of the central limit theorem and the invariance principle for the
random walk on the Penrose lattice in the same sense.

In what follows we introduce the basic terminology then the statement is
proved.

\section{Preliminaries\label{s1}}

We will consider infinite connected graphs with vertex set $\Gamma $. If an
edge joins vertices $x$ and $y$ we denote that edge by $x\sim y$. The
distance $d\left( x,y\right) $ will be the shortest path metric. In
particular we will speak about the integer lattice $\mathbb{Z}^{m}=\left( 
\mathbb{Z}^{m},d\right) $ graph where vertexes are elements of $\mathbb{Z}%
^{m}$ and $x,y\in \mathbb{Z}^{m}$ form an edge, $x\sim y$, if and only if $%
\left\vert x-y\right\vert =1$. We will speak about the integer lattice $%
\left( \mathbb{Z}^{m},\left\vert .\right\vert \right) $ if we consider the
same vertex and edge set but the metric is the Euclidean one. \ We do not
define the Penrose tiling ( cf. \cite{dB}), we assume that it is well
defined and given for us on $\mathbb{R}^{2}$.

The Penrose lattice $\left( \Gamma ,\left\vert .\right\vert \right) $ is a
metric space. It is the set of\emph{\ centers }(centroids)\emph{\ }of the
tiles equipped with the Euclidean distance. Two tiles are neighbors if they
are edge adjacent. Two vertexes of the Penrose lattice are neighbors if they
centers of neighboring tiles. \ Those vertexes form edges of the lattice. \
We will speak about Penrose graph, with the same vertex and edge set but
with $d\left( x,y\right) $, the shortest path graphs distance. Let $\Gamma
=\left( \Gamma ,d_{P}\right) $ denote the Penrose graph, and $\left( \mathbb{%
Z}^{2},d_{\mathbb{Z}}\right) $ integer lattice graph.

We distinguish tilings by fixing a reference vertex and identifying it with
the origin of $\mathbb{R}^{2}.$ \ We denote by $\Omega $ the space of
tilings (union of ten tori $\Omega _{i.,j}\subset \mathbb{R}^{2}$ $i,j=0..4,$
$i<j$ , for details cf. Section 2.1 in \cite{K}) . \ Let $d\left( x\right) $
be the degree of $x$, the number of neighbors.

\begin{definition}
A \ graph is weighted if a symmetric weight function $\mu _{x,y}>0$ is given
on the edges. This weight defines a measure on vertexes and sets:%
\begin{eqnarray*}
\mu \left( x\right) &=&\sum_{y\sim x}\mu _{x,y} \\
\mu \left( A\right) &=&\sum_{x\in A}\mu \left( x\right)
\end{eqnarray*}%
We denote the ball of radius of $r$ by $B\left( x,r\right) =\left\{
y:d\left( x,y\right) <r\right\} $ and we call $V\left( x,r\right) =\mu
\left( B\left( x,r\right) \right) $ its volume. \ In particular for the
Penrose lattice (and graph) $\mu _{x,y}\equiv 1$ if $x\sim y$ is an edge,
while $\mu _{x,y}$ is zero otherwise. The same applies for the integer
lattice.
\end{definition}

A Markov chain, with transition probabilities $P\left( x,y\right) $ is
reversible ($\mu $-reversible) if there is a $\mu $ measure such that $\mu
\left( x\right) P\left( x,y\right) =\mu \left( y\right) P\left( y,x\right) $.

\begin{definition}
In general a random walk\ $X_{n}$ on $\Gamma $, a weighted graph, with $\mu $
is a reversible Markov chain defined by the one step transition
probabilities:%
\begin{equation*}
P\left( X\,_{n}=y|X_{n-1}=x\right) =P\left( x,y\right) =\frac{\mu _{x,y}}{%
\mu \left( x\right) }.
\end{equation*}
\end{definition}

The random walk on the Penrose lattice (and graph) is reversible Markov
chain with transition probability $P\left( x,y\right) =1/d\left( x\right)
=1/4$ \ for $x\sim y$. It is clear that $d\left( x\right) P\left( x,y\right)
=d\left( y\right) P\left( y,x\right) =1$.\ Denote $X_{i}$ the actual
position of the Markov chain (random walk) which is well-defined for any
fixed $X_{0}\in \Gamma $ and it is the reference vertex of the Penrose
lattice.

\section{Heat kernel estimate for the Penrose graph\label{s2}}

First of all we give the definition the bi-Lipschitz property and rough
isometry.

\begin{definition}
A metric space $\left( \Gamma ,d\right) $ is bi-Lipschitz to $\left( \Gamma
^{\prime },d^{\prime }\right) $ if there is a bijection $\Phi $ from $\Gamma 
$ to $\Gamma ^{\prime }$ and a constant $C>1$ such that for all $x\neq y\in
\Gamma $%
\begin{equation}
\frac{1}{C}d\left( x,y\right) \leq d^{\prime }\left( \Phi \left( x\right)
,\Phi \left( y\right) \right) \leq Cd\left( x,y\right)  \label{bl}
\end{equation}
\end{definition}

\begin{definition}
Two weighted graphs $\Gamma $ with $\mu $ and $\Gamma ^{\prime }$ with $\mu
^{\prime }$ are roughly isometric (or quasi isometric) (cf. \cite[Definition
5.9]{BK}) if there is a map $\phi $ from $\Gamma $ to $\Gamma ^{\prime }$
such that there are $a,b,c,M>0$ for which
\end{definition}

\begin{equation}
\frac{1}{a}d\left( x,y\right) -b\leq d^{\prime }\left( \phi \left( x\right)
,\phi \left( y\right) \right) \leq ad\left( x,y\right) +b  \label{r1}
\end{equation}%
for all $x,y\in \Gamma ,$

\begin{equation}
d^{\prime }\left( \phi \left( \Gamma \right) ,y^{\prime }\right) \leq M
\label{r2}
\end{equation}%
for all $y^{\prime }\in \Gamma ^{\prime }$ and 
\begin{equation}
\frac{1}{c}\mu \left( x\right) \leq \mu ^{\prime }\left( \phi \left(
x\right) \right) \leq c\mu \left( x\right)  \label{r3}
\end{equation}%
for all $x\in \Gamma .$

\begin{remark}
\label{r4}It is clear that if $\phi $ from $\Gamma $ to $\Gamma ^{\prime }$
is a rough isometry then there is a rough isometry $\phi ^{\prime }$ from $%
\Gamma ^{\prime }$ to $\Gamma $ as well..
\end{remark}

\begin{theorem}
\label{p1}(Solomon \cite{S}) The Penrose lattice is bi-Lipschitz to the
integer lattice.
\end{theorem}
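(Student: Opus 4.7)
The plan is to reduce the claim to constructing a bi-Lipschitz bijection between two Delone point sets in $\mathbb{R}^{2}$ of equal density, and then to apply a discrepancy criterion of Laczkovich type to produce the matching.

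First I would observe that, because the two Penrose rhombs have uniformly bounded diameter and uniformly positive inradius, the centroid set $\Gamma$ is a Delone subset of $\mathbb{R}^{2}$: distinct centroids are at Euclidean distance at least some $r_{1}>0$, and every point of $\mathbb{R}^{2}$ lies within Euclidean distance $r_{2}$ of some centroid. A homothetic rescaling of $\mathbb{Z}^{2}$ is again Delone, and can be arranged to share with $\Gamma$ the common asymptotic density $\rho$ equal to the reciprocal of the mean tile area. Moreover, each graph step in the Penrose graph changes the Euclidean position by at most twice the tile diameter, while any Euclidean segment of length $L$ between two centroids crosses $O(L)$ tiles and so admits a graph path of length $O(L)$; the analogous comparison for $\mathbb{Z}^{2}$ is trivial. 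Hence the graph distances $d_{P}$ and $d_{\mathbb{Z}}$ are bi-Lipschitz to the ambient Euclidean distances on their vertex sets, and it suffices to construct the bijection in the Euclidean category.

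Second, I would verify the linear discrepancy estimate
\begin{equation*}
\bigl|\#(\Gamma\cap Q)-\rho\,\mathrm{vol}(Q)\bigr|\leq C\,\mathcal{H}^{1}(\partial Q)
\end{equation*}
for every axis-aligned square $Q\subset\mathbb{R}^{2}$. For the Penrose centroid set this is essentially automatic: only tiles meeting $\partial Q$ can disturb the count, and the bounded-diameter property caps their number by a multiple of the perimeter of $Q$. By the theorem of Laczkovich (in the form used by Burago--Kleiner and by Solomon), such a perimeter-type discrepancy bound is sufficient for the existence of a Euclidean bi-Lipschitz bijection $\Phi:\Gamma\to\rho^{-1/2}\mathbb{Z}^{2}$; composing with the homothety then finishes the proof.

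The main obstacle is the construction of $\Phi$ itself from the discrepancy bound, which is carried out by a dyadic transport argument: one partitions $\mathbb{R}^{2}$ into nested squares of side $2^{k}$, matches $\Gamma$-points to rescaled integer points inside each square up to a boundary error controlled by the displayed estimate, and propagates the residual mismatch to the next scale. Keeping the accumulated distortion uniformly bounded in $k$ is delicate --- it is exactly the geometric decay of the perimeter $2^{k}$ against the volume $4^{k}$ that makes the construction work, and it is here that the Burago--Kleiner counterexamples show the necessity of a \emph{perimeter-type}, not merely volumetric, error term. This step is the heart of Solomon's argument, which in fact exploits the substitution (inflation) structure of the Penrose tiling to produce the scale-by-scale matching in an especially clean self-similar form.
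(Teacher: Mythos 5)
First, note that the paper does not actually prove Theorem~\ref{p1}: it is imported verbatim from Solomon \cite{S}, and the paper only supplies its own short argument for the weaker rough-isometry statement (Proposition~\ref{p2}). So your proposal is being measured against Solomon's argument, and in broad outline you have reconstructed the right strategy: reduce to a Euclidean bi-Lipschitz matching of Delone sets of equal density and invoke the Laczkovich/Burago--Kleiner discrepancy criterion. The reduction of graph metrics to Euclidean metrics in your first paragraph is fine (and is essentially what the paper does for Proposition~\ref{p3}).

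There is, however, a genuine gap, and it sits exactly where you wave it away. Your claim that the perimeter discrepancy bound
$\bigl|\#(\Gamma\cap Q)-\rho\,\mathrm{vol}(Q)\bigr|\leq C\,\mathcal{H}^{1}(\partial Q)$
is ``essentially automatic'' because ``only tiles meeting $\partial Q$ can disturb the count'' is false. That argument works only for a tiling by tiles of a \emph{single} area: then $\#(\Gamma\cap Q)\cdot\rho^{-1}=\sum_{c(t)\in Q}\mathrm{area}(t)=\mathrm{vol}(Q)+O(\mathrm{per}(Q))$. The Penrose tiling has two tile types with different areas ($\sin 36^{\circ}$ and $\sin 72^{\circ}$ for unit edge), so with $\rho$ the reciprocal of the frequency-weighted mean area one gets, up to boundary terms,
\begin{equation*}
\#(\Gamma\cap Q)-\rho\,\mathrm{vol}(Q)\;\approx\;\sum_{t:\,c(t)\in Q}\Bigl(1-\rho\,\mathrm{area}(t)\Bigr)
=\delta_{1}N_{\mathrm{thin}}(Q)-\delta_{2}N_{\mathrm{thick}}(Q),
\end{equation*}
with $\delta_{1},\delta_{2}>0$. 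Bounding this by $C\,\mathrm{per}(Q)$ is equivalent to showing that the \emph{relative frequency} of thin versus thick rhombi in an arbitrary square deviates from its asymptotic value $1/\tau$ by at most $O(\mathrm{per}(Q)/\mathrm{vol}(Q))$. That is a nontrivial bounded-discrepancy property of the Penrose tiling; it is exactly where the substitution structure enters (the substitution matrix $\bigl(\begin{smallmatrix}2&1\\1&1\end{smallmatrix}\bigr)$ has second eigenvalue $\tau^{-2}<1$, so tile-type counts in supertiles equilibrate geometrically, and a general square is decomposed into supertiles plus a perimeter-sized boundary layer). The Burago--Kleiner examples you cite show precisely that bounded diameter and positive inradius alone cannot yield this estimate, so it cannot be ``automatic.'' You have therefore misplaced the hard part of the proof: once the discrepancy bound is established, the dyadic transport construction is supplied by the Laczkovich/Burago--Kleiner machinery (note also that Laczkovich's criterion is stated for finite unions of lattice cubes, not single squares); the Penrose-specific work is the discrepancy estimate itself, which your proposal leaves unproved.
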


\begin{proposition}
\label{p2}The Penrose lattice is rough isometric to the integer lattice.
\end{proposition}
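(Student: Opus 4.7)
The plan is to derive Proposition~\ref{p2} from Solomon's Theorem~\ref{p1} by upgrading the bi-Lipschitz bijection of Euclidean lattices to a rough isometry of the corresponding weighted graphs. Write $\Phi\colon\Gamma\to\mathbb{Z}^{2}$ for Solomon's bi-Lipschitz map, which satisfies (\ref{bl}) with respect to the Euclidean metrics $|\cdot|$ on both sides. I claim the same $\Phi$ realises a rough isometry from $(\Gamma,d_{P})$ to $(\mathbb{Z}^{2},d_{\mathbb{Z}})$, and I would check the three defining conditions (\ref{r1})--(\ref{r3}) in turn.

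Condition (\ref{r1}) reduces to showing, on each side, an equivalence up to an additive constant between the Euclidean and the graph distances, and then composing through $\Phi$. For $\mathbb{Z}^{2}$ one simply has $|x-y|\le d_{\mathbb{Z}}(x,y)\le\sqrt{2}\,|x-y|$. On the Penrose vertex set the lower bound is immediate: since the two Penrose rhombi have uniformly bounded diameter $D$, centers of edge-adjacent tiles are at Euclidean distance at most $D$, whence $|x-y|\le D\,d_{P}(x,y)$. The opposite inequality $d_{P}(x,y)\le C|x-y|+b$ is the crux. Given two tile centers $x,y$, I would consider the straight Euclidean segment $[x,y]$, perturbed slightly so as to avoid vertices of the tiling, and enumerate the tiles it successively crosses. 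Consecutive tiles in this list share an entire edge and are therefore adjacent in the Penrose graph, producing a path whose length equals the number of crossings. Because every Penrose tile has area bounded below by a positive constant and diameter at most $D$, every tile meeting $[x,y]$ is contained in its $D$-neighborhood, a stadium of area $O(|x-y|)+O(1)$; interior-disjointness of the tiles then bounds the number of crossings by the same quantity.

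Conditions (\ref{r2}) and (\ref{r3}) are essentially for free. Since $\Phi$ is a bijection one has $\phi(\Gamma)=\mathbb{Z}^{2}$, so (\ref{r2}) holds with $M=0$. Both the Penrose graph and $\mathbb{Z}^{2}$ are $4$-regular with unit edge weights, so $\mu(x)\equiv 4\equiv \mu'(\Phi(x))$ and (\ref{r3}) holds with $c=1$.

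The real obstacle is thus the linear-connectivity estimate $d_{P}(x,y)\le C|x-y|+b$, namely that Penrose graph geodesics are not much longer than Euclidean segments between the same endpoints. This rests solely on the bounded geometry of the two Penrose rhombi (bounded diameter, bounded-below inradius) and should be regarded as the dual counterpart of the easier direction of Theorem~\ref{p1}; once it is in place, the rough isometry follows by routine chaining of the inequalities above.
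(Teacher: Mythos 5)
Your argument is correct, but it takes a genuinely different route from the paper's, and most of your effort is aimed at a stronger statement than Proposition \ref{p2} actually asserts. In the paper's terminology the Penrose \emph{lattice} and the integer \emph{lattice} both carry the Euclidean metric, so once Solomon's Theorem \ref{p1} is granted, condition (\ref{r1}) holds with $b=0$, condition (\ref{r2}) holds with $M=0$ because $\Phi$ is a bijection, and condition (\ref{r3}) holds because both graphs are $4$-regular with unit weights; the paper notes exactly this in one line (``the statement follows from the bi-Lipschitz property''). The heart of your proposal --- the linear-connectivity estimate $d_{P}(x,y)\le C|x-y|+b$ obtained by counting the tiles crossed by a perturbed segment --- is sound (convexity of the rhombi guarantees each tile is entered only once, the edge-to-edge property makes consecutive crossed tiles adjacent, and the diameter and area bounds give the linear count), but it really belongs to the comparison of graph and Euclidean metrics, i.e.\ to Proposition \ref{p3} and Lemma \ref{L1}, not to Proposition \ref{p2} itself. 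The paper's own proof of Proposition \ref{p2} deliberately avoids invoking Theorem \ref{p1}: it constructs an explicit map $\Psi$ sending each rhombus centroid to the center of the square of $\varepsilon\mathbb{Z}^{2}$ containing it, with $\varepsilon$ chosen so small relative to the width $m$ of the thin rhombus that any $\varepsilon$-square containing a centroid lies inside the corresponding closed rhombus; since each square then holds at most one centroid and the tiles have bounded diameter, the rough-isometry inequalities follow with multiplicative constant essentially $1$ and an additive constant of the order of a tile diameter. The trade-off is clear: your route delivers the graph-metric comparison as a by-product but rests on Solomon's nontrivial theorem as a black box, whereas the paper's route is elementary and self-contained and produces the map $\Psi$ that is then reused in the proof of Proposition \ref{p3}.
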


The statement follows from the bi-Lipschitz property. \ A very short and
direct proof can be given, which we present here.

\begin{proof}[Proof of Proposition \protect\ref{p2}]
Denote by $m$ the smaller distance between the opposite boundaries of the
thin rhombus and write $\varepsilon =\sqrt{2}m/4$. Consider the integer
lattice $\varepsilon \mathbb{Z}^{2}$. It is clear that if an open square
with edge length $\varepsilon $ contains a center of a rhombus that it is
fully contained by the closed rhombus. \ Let $\Psi $ map the center of the
rhombus to the center of the square. \ It is clear that $\Psi $ is rough
isometry from the Penrose lattice to the integer lattice .
\end{proof}

\begin{proposition}
\label{p3}The Penrose graph is roughly isometric to the integer lattice
graph.
\end{proposition}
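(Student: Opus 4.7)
The plan is to upgrade the Euclidean rough isometry $\Psi$ of Proposition~\ref{p2} to a rough isometry between the graph metrics, by showing that on each side the graph distance is comparable to the Euclidean distance (up to a multiplicative and an additive constant). I would then verify the weight condition (\ref{r3}) separately, and this is essentially free because both graphs turn out to be regular of the same degree.

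First I would establish the comparability on the Penrose graph, that is, constants $K_1,K_2,b_0>0$ with
\begin{equation*}
K_1 |x-y| - b_0 \leq d_P(x,y) \leq K_2 |x-y| + b_0
\end{equation*}
for all vertices $x,y\in\Gamma$. The upper bound on $|x-y|$ in terms of $d_P$ uses that the Penrose tiles are rhombi of fixed edge length, so the Euclidean distance between the centers of two edge-adjacent rhombi is bounded above by some constant $L$; iterating along a geodesic in the graph gives $|x-y|\leq L\,d_P(x,y)$. For the reverse direction I would use the bounded geometry of the tiling: the straight segment from $x$ to $y$ crosses a sequence of tiles, and since each tile has a uniform lower bound on its inradius (there are only two prototiles), the number of tiles crossed is at most $C|x-y|+C'$. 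Each tile-to-tile crossing corresponds to a single edge of the Penrose graph, which supplies the desired bound on $d_P(x,y)$. The analogous comparability on $(\mathbb{Z}^2,d_{\mathbb{Z}})$ is trivial since $|x-y|\leq d_{\mathbb{Z}}(x,y)\leq \sqrt{2}\,|x-y|$.

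Next I would compose: Proposition~\ref{p2} supplies a map $\Psi:\Gamma\to \varepsilon\mathbb{Z}^2$ satisfying a two-sided affine bound in the Euclidean metrics together with the covering condition (\ref{r2}). Rescaling $\Psi$ by $1/\varepsilon$ to land in $\mathbb{Z}^2$ and combining with the two comparabilities above yields an affine two-sided estimate
\begin{equation*}
\tfrac{1}{a} d_P(x,y) - b \leq d_{\mathbb{Z}}(\Psi(x),\Psi(y)) \leq a\, d_P(x,y) + b,
\end{equation*}
so (\ref{r1}) holds. The covering condition (\ref{r2}) transfers immediately from the Euclidean version because graph balls in $\mathbb{Z}^2$ contain Euclidean balls of comparable radius.

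Finally, the weight condition (\ref{r3}) is the easy step: each rhombus in the Penrose tiling has exactly four edges, and each edge is shared with a unique neighboring rhombus, so the Penrose graph is $4$-regular; since $\mathbb{Z}^2$ is also $4$-regular and $\mu_{x,y}\equiv 1$ on both graphs, $\mu(x)=\mu'(\Psi(x))=4$ and (\ref{r3}) holds with $c=1$. The main obstacle is therefore the lower bound $d_P(x,y)\leq K\,|x-y|+b_0$ on the Penrose graph; everything else is a formal composition. Here one must exploit a genuinely geometric property of the tiling (the uniform lower bound on tile inradius, or equivalently the local finiteness of the tiling with bounded prototile set) rather than any recursive or substitution structure specific to Penrose.
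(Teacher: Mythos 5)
Your proposal is correct in substance but organized differently from the paper. The paper works directly with the map $\Psi$ of Proposition \ref{p2} and compares the two \emph{graph} metrics head-on: the bound $d_{\mathbb{Z}}(\Psi(x),\Psi(y))\leq 2L\,d_P(x,y)$ comes from covering the diagonal of a rhombus by at most $2L$ squares of side $\varepsilon$, and the reverse inequality $d_P(x,y)\leq d_{\mathbb{Z}}(\Psi(x),\Psi(y))$ is simply asserted as clear; conditions (\ref{r2}) and (\ref{r3}) are likewise dispatched in one line. You instead factor the problem: prove that on each graph the path metric is affinely comparable to the Euclidean metric on the vertex set, and then compose with the Euclidean rough isometry of Proposition \ref{p2}. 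This buys a cleaner logical structure and, importantly, an actual argument for the step the paper leaves implicit --- the inequality $d_P(x,y)\leq K|x-y|+b_0$, which you correctly identify as the only nontrivial geometric input (and which, via your segment-crossing argument, also establishes connectivity of the Penrose graph along the way). The price is that your route is longer and re-proves comparabilities that the paper's direct covering argument handles in one stroke.

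Two small points to tighten in your key step. First, consecutive tiles crossed by the straight segment from $x$ to $y$ may meet only at a vertex of the tiling rather than along an edge, and the Penrose graph has edges only for edge-adjacent tiles; this is repaired either by perturbing the segment off the vertex set or by noting that each tiling vertex is surrounded by a bounded number of tiles, so a vertex crossing costs a bounded number of graph edges. Second, the uniform lower bound on the inradius does not by itself bound the number of tiles crossed, since the segment may merely clip a corner of a tile and spend arbitrarily little length in it; the standard fix is an area count: every tile meeting the segment lies in a neighborhood of the segment of width equal to the (bounded) tile diameter, that neighborhood has area at most $C(|x-y|+1)$, and each tile has area bounded below, so the number of tiles met is at most $C'|x-y|+C''$. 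With these repairs your argument is complete; the $4$-regularity observation for (\ref{r3}) agrees with the paper's convention $P(x,y)=1/4$.
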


\begin{proof}
Let us consider $\Psi ,$ the map introduced above, between $\Gamma $ and $%
\varepsilon \mathbb{Z}^{2}$. Now we consider the graph distances $d_{P},d_{%
\mathbb{Z}}$. \ \ It is clear that 
\begin{equation*}
d_{P}\left( x,y\right) \leq d_{\mathbb{Z}}\left( \Psi \left( x\right) ,\Psi
\left( y\right) \right) .
\end{equation*}%
The opposite inequality is also easy. \ Let $2L$ be the maximal number of
squares which is needed to cover the largest diagonal of rhombi. It is clear
that $L$ is bounded since the diameter of the rhombi is also bounded. \ Then 
\begin{equation*}
d_{\mathbb{Z}}\left( \Psi \left( x\right) ,\Psi \left( y\right) \right) \leq
2Ld_{P}\left( x,y\right) .
\end{equation*}%
It is also clear that the conditions $\left( \ref{r2},\ref{r3}\right) $ are
satisfied.
\end{proof}

\begin{lemma}
\label{L1} The Penrose lattice and the Penrose graph are roughly isometric.
\end{lemma}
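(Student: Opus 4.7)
The plan is to take $\phi = \mathrm{id}_{\Gamma}$. Since the Penrose lattice $(\Gamma,|\cdot|)$ and the Penrose graph $(\Gamma,d_{P})$ share the same vertex set, the same edge set, and the same weight function $\mu_{x,y}\equiv 1$, condition $(\ref{r2})$ holds with $M=0$ and condition $(\ref{r3})$ holds with $c=1$. Only the metric-comparison condition $(\ref{r1})$ requires work, i.e.\ one must compare $|x-y|$ with $d_{P}(x,y)$.

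The easy half is $|x-y|\leq D\, d_{P}(x,y)$. The Penrose tiles have uniformly bounded diameter, so any two edge-adjacent centres are at Euclidean distance at most some $D>0$. Applying this inequality along a shortest path in the Penrose graph yields $|x-y|\leq D\, d_{P}(x,y)$, which is the left half of $(\ref{r1})$.

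For the harder inequality $d_{P}(x,y)\leq a|x-y|+b$ I would route through the auxiliary lattice $\varepsilon\mathbb{Z}^{2}$ using what is already proved. The map $\Psi$ built in the proof of Proposition \ref{p2} is a rough isometry $(\Gamma,|\cdot|)\to(\varepsilon\mathbb{Z}^{2},|\cdot|)$, and by Proposition \ref{p3} the same $\Psi$ is a rough isometry $(\Gamma,d_{P})\to(\varepsilon\mathbb{Z}^{2},d_{\mathbb{Z}})$. On $\varepsilon\mathbb{Z}^{2}$ the Euclidean and graph metrics are bi-Lipschitz equivalent,
\begin{equation*}
\frac{\varepsilon}{\sqrt{2}}\,d_{\mathbb{Z}}(u,v)\leq |u-v|\leq \varepsilon\, d_{\mathbb{Z}}(u,v).
\end{equation*}
Chaining these three relations turns a Euclidean upper bound on $|\Psi(x)-\Psi(y)|$ into a graph-distance upper bound on $d_{\mathbb{Z}}(\Psi(x),\Psi(y))$, and finally into an affine bound $d_{P}(x,y)\leq a|x-y|+b$.

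The only delicate step is bookkeeping: each rough isometry contributes its own additive and multiplicative slack, and one must verify that the composition remains affine in $|x-y|$; this is routine and not a genuine obstacle. A self-contained alternative, avoiding $\varepsilon\mathbb{Z}^{2}$, would be to follow the straight segment from $x$ to $y$ and count the Penrose tiles it meets: since the tiles have a uniform lower bound on their inradii, this segment intersects $O(|x-y|)+O(1)$ tiles, each transition supplying one edge of a Penrose-graph path from $x$ to $y$. Either route closes the argument, after which Remark \ref{r4} gives a rough isometry in the opposite direction as well.
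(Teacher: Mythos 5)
Your proposal is correct and follows essentially the same route as the paper: the hard inequality $d_{P}(x,y)\leq a|x-y|+b$ is obtained by chaining the maps of Propositions \ref{p2} and \ref{p3} through $\varepsilon \mathbb{Z}^{2}$ together with the bi-Lipschitz equivalence of the Euclidean and graph metrics there, which is exactly the paper's composition $\Phi _{3}\circ \Phi _{2}\circ \Phi _{1}$. Your packaging via the identity map on the common vertex set (which makes $\left( \ref{r2}\right) $ and $\left( \ref{r3}\right) $ trivial) is slightly cleaner than the paper's composite map, and the tile-counting alternative you sketch is a valid self-contained substitute, but neither changes the substance of the argument.
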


\begin{proof}
Let $\Phi _{1}$ the rough isometry from the graph $\Gamma $ to the lattice $%
Z^{2}$, let $\Phi _{2}$ the identity map on $Z^{2}$ which is bi-Lipschitz
between the integer lattice and graph, finally $\Phi _{3}$ the rough
isometry from the integer graph to the Penrose graph. The existence of $\Phi
_{3}$ follows from Proposition \ref{p3} and Remark \ref{r4}. Then \ $\Phi
=\Phi _{3}\circ \Phi _{2}\circ \Phi _{1}$ is rough isometry between the
Penrose lattice and graph.
\end{proof}

Now we recall Delmotte's result \cite{D} omitting the third equivalent
statement, the parabolic Harnack inequality, since we do not need it in the
sequel.

\begin{theorem}
\label{De}Let $\Gamma $ with $\mu $ be a weighted graph. Assume that there
is a \ $p_{0}>0$ such that for all edges $P\left( x,y\right) \geq p_{0}$.
Then the following statements are equivalent.\newline
\newline
1. there are $C,c>0,$ $\alpha \geq 1$ such that for all $x,y\in \Gamma $ and 
$n>0$%
\begin{equation}
\frac{c}{V\left( x,\sqrt{n}\right) }\exp \left( -C\frac{d\left( x,y\right)
^{2}}{n}\right) \leq \widetilde{p}_{n}\left( x,y\right) \leq \frac{C}{%
V\left( x,\sqrt{n}\right) }\exp \left( -c\frac{d\left( x,y\right) ^{2}}{n}%
\right)  \label{Ge22}
\end{equation}%
holds,\newline
2.\newline
$\left( i\right) $The volume doubling condition $\left( VD\right) $ holds:
there is a $C>0$ such that for all $x\in \Gamma ,$ $r\geq 1$%
\begin{equation*}
V\left( x,2r\right) \leq CV\left( x,r\right)
\end{equation*}%
and \newline
$\left( ii\right) $ the Poincare inequality $\left( PI_{2}\right) $ holds:%
\newline
there is a $C>0,$ such that for all $x$ and, $r>1$,$f:B\left( x,r\right)
\rightarrow \mathbb{R}$ 
\begin{equation*}
\sum_{y\in B\left( x,r\right) }\left( f\left( y\right) -f_{B}\right) ^{2}\mu
\left( y\right) \leq cr^{2}\sum_{y,z\in B\left( x,r\right) }\left( f\left(
y\right) -f\left( z\right) \right) ^{2}\mu _{y,z}
\end{equation*}%
where $f_{B}=\frac{1}{V\left( x,r\right) }\sum_{y\in B}f\left( y\right) \mu
\left( y\right) ,$ $f\neq 0,$ $B=B\left( x,r\right) $.
\end{theorem}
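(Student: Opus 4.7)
The plan is to follow Delmotte's strategy, which in turn adapts the Grigoryan--Saloff-Coste equivalence from manifolds to graphs. Although the parabolic Harnack inequality (PHI) is suppressed from the statement, I would reintroduce it as a hidden bridge: the logical chain is (2) $\Rightarrow$ PHI $\Rightarrow$ (1) and (1) $\Rightarrow$ PHI $\Rightarrow$ (2). The uniform ellipticity hypothesis $P(x,y)\geq p_{0}$ on edges makes the discrete Dirichlet form $\mathcal{E}(f,f)=\tfrac{1}{2}\sum_{y\sim z}(f(y)-f(z))^{2}\mu_{y,z}$ comparable to the energy $\langle (I-P)f,f\rangle_{\mu}$, which is what allows the continuous-space toolbox to be ported to graphs.

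For the substantive direction (2) $\Rightarrow$ (1), I would first combine $(VD)$ with $(PI_{2})$ to derive discrete Sobolev/Nash inequalities on balls via Saloff-Coste's iteration scheme. Moser iteration applied to the discrete heat equation $u_{n+1}-u_{n}=(P-I)u_{n}$ then produces the on-diagonal upper bound $\widetilde{p}_{n}(x,x)\leq C/V(x,\sqrt{n})$, and Davies' exponential-perturbation method (controlling $\langle e^{\psi}P^{n}e^{-\psi}f,f\rangle$ for Lipschitz $\psi$ and optimizing) upgrades this to the full Gaussian upper bound in (\ref{Ge22}). For the lower bound, one proves a near-diagonal estimate $\widetilde{p}_{n}(x,y)\geq c/V(x,\sqrt{n})$ for $d(x,y)\leq \eta\sqrt{n}$ via a discrete elliptic Harnack inequality (coupling Moser iteration with a John--Nirenberg-type oscillation lemma), and then a chaining argument along a geodesic from $x$ to $y$, using Chapman--Kolmogorov on $\sim d(x,y)^{2}/n$ overlapping balls of radius $\sqrt{n/k}$, produces the off-diagonal Gaussian lower bound.

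The direction (1) $\Rightarrow$ (2) is lighter: setting $y=x$ in the upper bound and matching with the lower bound at a point of distance $\sim\sqrt{2n}$ yields $V(x,\sqrt{2n})\leq C\,V(x,\sqrt{n})$ after absorbing the universal Gaussian constants, while $(PI_{2})$ is extracted by a semigroup argument in which $f_{B}$ is approximated by $P_{r^{2}}f$ on $B(x,r)$ and $\|f-P_{r^{2}}f\|_{2}^{2}$ is bounded by $r^{2}\mathcal{E}(f,f)$ via spectral calculus, with the near-diagonal kernel bound controlling the error of the approximation. I expect the main obstacle to be the Moser iteration together with the chaining lower bound: in the graph setting the Leibniz and chain rules fail exactly, so nonlinear compositions $f\mapsto f^{p}$ appearing in the iteration must be handled by ad hoc discrete identities (Delmotte's ``pseudo-gradient'' inequalities), and the chaining step requires $(VD)$ to keep the volumes of the intermediate balls comparable along the entire geodesic. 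These discrete subtleties are what make the graph version technically heavier than its manifold counterpart, even though the logical skeleton is identical.
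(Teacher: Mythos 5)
You should be aware that the paper does not prove Theorem \ref{De} at all: it is quoted from Delmotte \cite{D} as a known black box, with the third equivalent condition (the parabolic Harnack inequality) deliberately suppressed because it is not used later. There is therefore no in-paper argument to compare yours against; the relevant comparison is with the cited source, and on that score your outline is faithful. Reinstating PHI as the hidden bridge, deriving Sobolev/Nash inequalities on balls from $(VD)$ and $(PI_2)$, running a discrete Moser iteration for the on-diagonal upper bound, upgrading it by Davies' exponential perturbation, obtaining the near-diagonal lower bound from a Harnack inequality and then chaining along a geodesic, and finally extracting $(VD)$ and $(PI_2)$ back out of the two-sided bounds --- this is precisely the architecture of Delmotte's proof, which is itself the graph transcription of the Grigor'yan--Saloff-Coste equivalence. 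You also correctly identify why the hypothesis $P(x,y)\geq p_{0}$ is needed and where the genuinely discrete difficulties sit (no chain rule in the Moser iteration, volume comparability along the chain). What you have written is, however, a roadmap rather than a proof: the analytic core --- the iteration lemmas, the John--Nirenberg/oscillation step, the weighted $L^{2}$ estimates behind Davies' method on graphs --- is named but not executed, and executing it is the substance of the cited paper. One caveat worth recording: the statement as transcribed asserts the bounds ``for all $x,y$ and $n>0$'', which cannot be literally correct, since $\widetilde{p}_{n}(x,y)=0$ whenever $d(x,y)>n+1$; the lower bound in $(\ref{Ge22})$ holds only in the range $d(x,y)\leq n$, and that is exactly the range your chaining argument would deliver, so your proof sketch is in fact consistent with the corrected statement rather than with the one printed here.
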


It is well known that the volume doubling property as well as the Poincare
inequality are rough isometry invariant. \ Of course volume doubling can be
replaced with $V\left( x,r\right) \simeq r^{\alpha }$ and $\left( \ref{Ge22}%
\right) $ reduces into $\left( \ref{GEaa2}\right) .$

\begin{corollary}
\label{CGE2} If $\Gamma $ with $\mu $ and $\Gamma ^{\prime }$ with $\mu
^{\prime }$ are roughly isometric graphs then $\left( GE_{\alpha ,2}\right) $
holds for one if and only if holds for the other.
\end{corollary}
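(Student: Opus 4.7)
The plan is to chain Theorem \ref{De} together with the rough isometry invariance of the volume doubling $(VD)$ and Poincar\'e $(PI_2)$ conditions (both stated in the paragraph immediately after the theorem), and to supplement these with a short verification that the polynomial volume growth $V(x,r)\simeq r^{\alpha}$ is itself rough isometry invariant. As noted in that paragraph, $(GE_{\alpha,2})$ is exactly the conjunction of $(GE_2)$ with $V(x,r)\simeq r^{\alpha}$, so the corollary reduces to transferring these three ingredients.

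First I would assume $(GE_{\alpha,2})$ on $\Gamma$. The hypothesis $P(x,y)\ge p_0$ of Theorem \ref{De} holds automatically in the setting of interest (bounded degree, edge weights bounded away from $0$ and $\infty$), so Delmotte's equivalence turns $(GE_2)$ into the pair $(VD)+(PI_2)$ on $\Gamma$. Invoking the quoted invariance then carries both conditions over to $\Gamma'$.

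Second, I would transfer the polynomial growth itself. Given the rough isometry $\phi:\Gamma\to\Gamma'$ with constants $a,b,c,M$ from $(\ref{r1})$--$(\ref{r3})$, fix any $x'\in\Gamma'$ and choose $x\in\Gamma$ with $d'(\phi(x),x')\le M$. The distortion inequality $(\ref{r1})$ sandwiches the $\phi$-preimage of $B'(x',r)$ between two balls in $\Gamma$ of radii comparable to $r$ (with additive constants), the quasi-surjectivity $(\ref{r2})$ lets one cover $B'(x',r)$ by the image of such a ball together with its $M$-neighborhood, and the weight comparison $(\ref{r3})$ converts $\mu$-measures into $\mu'$-measures up to constants. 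Combining these with $V(x,r)\simeq r^{\alpha}$ yields $V'(x',r)\simeq r^{\alpha}$ for $r\ge 1$, uniformly in $x'$.

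Third, Theorem \ref{De} applied on $\Gamma'$ in the converse direction turns $(VD)+(PI_2)$ back into $(GE_2)$ on $\Gamma'$, and together with the polynomial growth just established this is exactly $(GE_{\alpha,2})$ on $\Gamma'$. The reverse implication follows symmetrically via Remark \ref{r4}. The main (and essentially only) piece of bookkeeping is the transfer of polynomial volume growth; once that is in hand, the corollary is a mechanical concatenation of Delmotte's theorem with the stated rough isometry invariance of $(VD)$ and $(PI_2)$.
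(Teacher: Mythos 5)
Your proposal is correct and follows essentially the same route the paper intends: Delmotte's equivalence (Theorem \ref{De}) converts $(GE_2)$ into $(VD)+(PI_2)$, these are transferred by the stated rough isometry invariance, and the polynomial volume growth $V(x,r)\simeq r^{\alpha}$ is carried over separately so that $(GE_2)$ specializes back to $(GE_{\alpha,2})$ on the target graph. The paper leaves the corollary as an immediate consequence of the paragraph following Theorem \ref{De}; your only addition is making explicit the (routine but necessary) transfer of $V(x,r)\simeq r^{\alpha}$, which the paper glosses over.
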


\begin{theorem}
The Gaussian estimate $\left( GE_{2,2}\right) $ holds for the random walk on
the Penrose graph.
\end{theorem}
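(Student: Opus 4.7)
The plan is to combine the rough isometry of Proposition \ref{p3} with the rough-isometry invariance of $(GE_{\alpha,2})$ stated in Corollary \ref{CGE2}. First, I would record that the simple symmetric random walk on the integer lattice graph $(\mathbb{Z}^2, d_{\mathbb{Z}})$ satisfies $(GE_{2,2})$: its graph-distance balls have volume $V(x,r) \simeq r^2$ (so $\alpha = 2$), and the two-sided Gaussian bound is classical, for instance via the local central limit theorem for the lattice walk together with standard moderate-deviation tail estimates. Alternatively, Theorem \ref{De} applies directly since $\mathbb{Z}^2$ trivially enjoys volume doubling $(VD)$ and the Poincar\'e inequality $(PI_2)$.

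Next, I would check the remaining hypotheses of Corollary \ref{CGE2}. The rough isometry is provided by Proposition \ref{p3}. The ellipticity assumption $P(x,y) \geq p_0 > 0$ underlying Delmotte's theorem holds on both graphs: on $\mathbb{Z}^2$ one has $P(x,y) = 1/4$, and on the Penrose graph only finitely many local vertex configurations arise, so the degree $d(x)$, and hence the transition probability $1/d(x)$, is uniformly bounded below by some $p_0 > 0$. The same bounded-degree remark ensures that the weight comparability (\ref{r3}) is satisfied with the natural weights $\mu(x) = d(x)$ on both sides.

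Finally, Corollary \ref{CGE2} transfers $(GE_{2,2})$ from $\mathbb{Z}^2$ to the Penrose graph, yielding the theorem. I do not foresee a serious obstacle: the geometric content is already packaged into Proposition \ref{p3}, and the analytic content is packaged into Delmotte's equivalence together with the rough-isometry invariance of $(VD)$ and $(PI_2)$ that underlie Corollary \ref{CGE2}. The only point worth a moment's care is that the exponent $\alpha = 2$ is indeed correct on the Penrose side, but this is automatic because the rough isometry to $\mathbb{Z}^2$ propagates the quadratic volume growth $V(x,r) \simeq r^2$ to the Penrose graph (matching the Euclidean area of the region of rhombi within graph distance $r$). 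Hence the conclusion follows immediately.
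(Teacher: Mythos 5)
Your proposal is correct and follows essentially the same route as the paper: transfer $(GE_{2,2})$ from $\mathbb{Z}^2$ to the Penrose graph via the rough isometry and Corollary \ref{CGE2}. You are in fact slightly more careful than the paper, which cites Proposition \ref{p2} (the lattice version) where Proposition \ref{p3} (the graph version, which you use) is the relevant one, and which does not explicitly verify the ellipticity hypothesis $P(x,y)\geq p_0$ of Delmotte's theorem as you do.
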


\begin{proof}
Proposition \ref{p2} ensures that Penrose graph is rough isometric to $%
\mathbb{Z}^{2}$. \ It is well-known that $\left( GE_{2,2}\right) $ holds for
the random walk on the integer lattice (graph), and then by Corollary \ref%
{CGE2} $\left( GE_{2,2}\right) $ holds for the random walk on the Penrose
graph as well.
\end{proof}

\section{The invariance principle\label{s3}}

In this section we confine ourself to the Penrose lattice. The Gaussian
estimate $\left( GE_{2,2}\right) $ provides a nice description of the random
walk on the Penrose graph but the different constants in the exponents mean
that we have only estimate of the variance and it may change from place to
place as well as in time. \ Particularly we do not know if the properly
scaled mean square displacement $\frac{1}{n}E\left( d^{2}\left(
X_{0},X_{n}\right) \right) $ has a limit. \ Of course we expect that due to
the asymptotic spherical symmetry of the Penrose tiling the diffusion matrix
is the identity matrix up to a fixed constant multiplier. In other words the
scaled mean square displacement is direction independent. In order to obtain
the invariance principle for the Penrose lattice we need a different method.
\ This is the method of ergodic processes of the environment "seen from the
tagged particle" (cf. \cite{KV},\cite{MFGW}). Thanks to the result of De
Masi \& all \cite{MFGW} it is enough to check that the conditions of Theorem
2.1 in \cite{MFGW} are satisfied and that the covariance matrix is positive
definite.

There are several formulation of the invariance principle, (see for the
classical formulation in \cite{B}). We say that $X_{t}$ satisfies the
central limit theorem $\left( CLT\right) $ if there is a $\sigma \geq 0$
such that%
\begin{equation*}
\frac{X_{t}}{\sqrt{t}}\rightarrow N\left( 0,\sigma \right)
\end{equation*}%
in distribution. \ What is slightly stronger, the process can be re-scaled,
that is for $\varepsilon \rightarrow 0$ for all $t$ 
\begin{equation}
X^{\left( \varepsilon \right) }=\varepsilon X_{t/\varepsilon
^{2}}\rightarrow W_{\sigma }\left( t\right)  \label{rs}
\end{equation}%
where Wiener process with variance $\sigma t$ in the sense of finite
dimensional distributions. The invariance principle holds if the convergence
holds for the path-space measures for the processes (cf. Goldstein \cite{G}%
). \ This requirement is equivalent with $\left( \ref{rs}\right) $ and
"tightness" of the process (cf. \cite{B}).

Let $\mathbb{P}_{\mu }$ the path-space measure of the processes $\left(
X_{t}^{\left( \varepsilon \right) }\right) _{t\geq 0},\varepsilon >0.$
Following \cite{MFGW} we say that $X^{\left( \varepsilon \right) }=\left(
X_{t}^{\left( \varepsilon \right) }\right) _{t\geq 0}$ converges weakly in $%
\mu $-measure to $Y$ if for any continuous function $F$ on the path-space $%
D\left( \left[ 0,\infty \right) ,\Gamma \right) $:%
\begin{equation*}
\mathbb{E}_{\mu }\left( F\left( X^{\left( \varepsilon \right) }\right)
\right) \rightarrow \mathbb{E}_{\mu }\left( Y\right) .
\end{equation*}

\begin{definition}
\label{DIP}We say that the invariance principle holds if the weak
convergence in $\mu $-measure to the Wiener process holds in this sense.
\end{definition}

We consider the environment process $\omega _{n}$ seen from the particle. \
It is more convenient to use (as it is done by Kunz in \cite{K}) the Markov
chain $z_{n}=\left( \omega _{n},X_{n}\right) $. \ Let us note that $\mathbb{E%
}_{\mu }\left( F\left( X^{\left( \varepsilon \right) }\right) \right) $
means that the underlying environment process is started from the invariant
measure $\mu $, $\mathbb{E}_{\mu }\left( F\left( X^{\left( \varepsilon
\right) }\right) \right) =\mathbb{E}_{\mu }\left( F\left( X^{\left(
\varepsilon \right) }\right) |\omega _{0}=\omega \right) $ $\omega $ is
chosen according to $\mu $, in other words we average with respect to the
(initial) environment and obtain an annealed type of result (cf. \cite{SS}).

\begin{theorem}
\label{T1}The random walk on the Penrose lattice satisfies the invariance
principle with non-degenerate covariance matrix.
\end{theorem}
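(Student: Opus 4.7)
\medskip

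\noindent\textbf{Proof plan.} The plan is to apply Theorem 2.1 of De Masi, Ferrari, Goldstein and Wick \cite{MFGW}, as already signalled in the previous subsection. That theorem delivers an annealed invariance principle for a reversible Markov chain in random environment once (i) the environment process seen from the particle is stationary and ergodic with respect to some probability measure, and (ii) a square-integrability bound on the one-step displacement holds. Non-degeneracy of the limiting covariance then requires a separate argument.

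First I would set up the environment process following Kunz. Using the parameterisation $\Omega=\bigcup_{i<j}\Omega_{i,j}$ of pointed Penrose tilings, each time the walker jumps to a neighbouring vertex the tiling is re-centred so that the new vertex becomes the origin; this defines a Markov chain $\omega_n$ on $\Omega$. The natural invariant measure $\pi$ on $\Omega$ is normalised Lebesgue measure on each torus $\Omega_{i,j}$. Since every vertex of the Penrose graph has degree exactly $4$ and $\mu_{x,y}\equiv 1$, the joint chain $z_n=(\omega_n,X_n)$ is reversible with respect to $\pi$ weighted by the (constant) degree, so $\pi$ is stationary for $\omega_n$.

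The square-integrability hypothesis of \cite{MFGW} is then immediate: the rhombi have bounded diameter, so the Euclidean displacement $|X_1-X_0|$ is bounded, and each transition probability equals $1/4$. The substantive item is ergodicity of $(\omega_n,\pi)$. This is where the specific structure of the Penrose tiling enters through the cut-and-project / de Bruijn pentagrid construction: the shifts on the tori $\Omega_{i,j}$ induced by the walk decompose into translations by vectors whose coordinates are irrational over $\mathbb{Q}$ (related to the golden ratio), and Kronecker-Weyl-type arguments give minimality and in fact unique ergodicity on each torus. This is essentially the content of the analysis in \cite{K}, which I would invoke directly; supplying every detail of the ergodicity check is the main obstacle of the argument.

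Having secured weak convergence $\varepsilon X_{t/\varepsilon^{2}}\to W_{\Sigma}(t)$ under $\mathbb{P}_{\pi}$ with some symmetric positive semidefinite $\Sigma$, it remains to show $\Sigma$ is positive definite. The measure $\pi$ is invariant under the dihedral group $D_{5}$ acting on pointed tilings by rotations by multiples of $2\pi/5$ and by reflections, hence $\Sigma$ must commute with this action on $\mathbb{R}^{2}$; by Schur's lemma this forces $\Sigma=\sigma^{2}I$ for some $\sigma\geq 0$. Strict positivity $\sigma^{2}>0$ is then extracted from the Gaussian lower bound $\left(GE_{2,2}\right)$: summing $d(X_{0},X_{n})^{2}\widetilde{p}_{n}(X_{0},y)$ over $y$ gives $\mathbb{E}[d(X_{0},X_{n})^{2}]\geq cn$ uniformly in the environment, and the bi-Lipschitz equivalence of $d_{P}$ and the Euclidean metric provided by Theorem \ref{p1} transfers this lower bound to $\mathbb{E}[|X_{n}-X_{0}|^{2}]$, so the diffusion constant $\sigma^{2}$ cannot vanish.
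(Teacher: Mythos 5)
Your reduction of the theorem to Theorem 2.1 of \cite{MFGW} is exactly the paper's route: both invoke Kunz's ergodicity of the environment chain $z_n=(\omega_n,X_n)$ with the Lebesgue-type invariant measure on the tori $\Omega_{i,j}$, both get the integrability hypotheses for free from the bounded diameter of the rhombi and the degree-$4$, unit-weight structure, and both then face non-degeneracy of the limiting covariance as the only substantive remaining point. Where you genuinely diverge is in that last step. The paper does not use any symmetry: it fixes an arbitrary unit vector $e$, intersects the annulus $B(0,C_2\sqrt{n})\setminus B(0,C_1\sqrt{n})$ with a cone of aperture $\alpha<\pi/2$ around $e$, and uses the Gaussian \emph{lower} bound of $(GE_{2,2})$ (transported from the graph metric to the Euclidean one by the rough isometry of Lemma \ref{L1}) to show $\frac1n\mathbb{E}\bigl((e^{\ast}X_n)^2\mid X_0=x_0\bigr)\geq c>0$ uniformly in $e$ and $x_0$; this bounds every diagonal entry of the quadratic form directly. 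You instead first argue $\Sigma=\sigma^2 I$ by $D_5$-equivariance and Schur's lemma, and then only need positivity of the trace, which your $(GE_{2,2})$ computation does deliver (it is essentially the paper's computation with the cone removed).

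The weak point is the symmetry step. The claim that the invariant measure $\pi$ and the walk dynamics are equivariant under the $D_5$-action on pointed tilings is precisely what the paper's author declines to prove: a closing remark states only that it ``seems plausible that with some extra work'' the covariance is a scalar multiple of the identity. Checking it is not a one-liner, because a rotation by $2\pi/5$ cyclically permutes the grid indices and hence permutes the tori $\Omega_{i,j}$, and one must verify that the weights $\tau^{1-\lfloor(j-i)/2\rfloor}$ assigned to the components are respected by that permutation (note that the pair $(i,j)$ with $j-i=1$ can be carried to one with $j-i=4$, which receives a different weight in the formula as written). Until that bookkeeping is done, your argument proves non-degeneracy only up to an unverified invariance claim, whereas the paper's cone argument sidesteps the issue entirely at the cost of not identifying $\Sigma$ as isotropic. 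If you do carry out the equivariance check, your route proves strictly more than the theorem asks; as it stands, you should either supply that check or replace the Schur step by the paper's direction-by-direction lower bound.
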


\begin{proof}
\ Kunz has shown that $z_{n}$ is ergodic (see also a more general result by
Robinson \cite{R}). The invariant measure is combination of the Lebesgue
measure $\lambda $ on the tori $\Omega _{i,j}$. \ \ For $0\leq i<j\leq 4$%
\begin{equation*}
\mu |_{\Omega _{i,j}}=\tau ^{1-\left\lfloor \frac{j-i}{2}\right\rfloor
}\lambda
\end{equation*}%
where $\tau $ is the "Golden mean" $\left( \sqrt{5}+1\right) /2$. \ It is
clear that $X_{n}=\sum_{i=1}^{n}V\left( z_{i-1},z_{i}\right) $ where $%
V\left( z_{i-1},z_{i}\right) =X_{i}-X_{i-1}$ is an antisymmetric function,
(cf. \cite{MFGW} (2.3),(2.6) and the remark below it.) \ It follows from the
definition that $X_{n}$ and $z_{n}$ as well are reversible. The random walk
is well defined for all $\omega $, for all Penrose lattice, with given
reference vertex, hence we have the path metric $\mathbb{P}_{\omega }$.
Similarly $\mathbb{P}_{\mu }$ is well defined if the initial lattice is
chosen according to the invariant measure. \ Let $\mathbb{E}_{\mu }$ denote
the corresponding expected value.\ The only properties are to check that the
conditional drift 
\begin{equation}
\varphi =\mathbb{E}_{\mu }\left( X_{1}-X_{0}|X_{0}\right)  \label{fi}
\end{equation}%
exists and that the covariance matrix%
\begin{equation}
D=\mathbb{E}_{\mu }\left( \left( X_{1}-\varphi \right) \left( X_{1}-\varphi
\right) ^{\ast }\right)  \label{d}
\end{equation}%
is non-degenerate. \ For any given $X_{0}=x$ the conditional drift evidently
exists thanks to the bounded distances of neighbors. \ The conditions of the
main result of \cite{MFGW} are satisfied, hence the invariance principle
holds for $X_{i}\ $\ in the sense of Definition \ref{DIP}.

Let us recall that the $D$ always exists (see Remark 1. below \ (2.30) in 
\cite{MFGW}).

We show that the covariance matrix is positive definite. Let us consider the
annulus $B\left( 0,C_{2}\sqrt{n}\right) \backslash B\left( 0,C_{1}\sqrt{n}%
\right) $ intersected with the cone about a given direction $e\in \mathbb{R}%
^{2}$ with angle\ $\alpha $ fixed $\pi /2>\alpha >0.$ \ Let $H$ denote the
intersection. The constants $C_{1},C_{2}$ are arbitrary and fixed. \ Let us
recall (Lemma \ref{L1}) that the Penrose lattice and graph are roughly
isometric 
\begin{eqnarray*}
&&\frac{1}{n}\mathbb{E}\left( e^{\ast }X_{n}X_{n}^{\ast
}Ee|X_{_{0}}=x_{0}\right) \\
&=&\frac{1}{n}\mathbb{E}\left( \left( e^{\ast }X_{n}\right)
^{2}|X_{_{0}}=x_{0}\right) \\
&\geq &\frac{1}{n}\sum_{x\in H}\left( ex\right) ^{2}P_{n}\left(
x_{0},x\right) \\
&\geq &\frac{\left\vert H\right\vert }{n}c^{\prime }\left( \cos \left(
\alpha \right) \sqrt{n}\right) ^{2}\frac{c^{\prime \prime }}{n}\exp \left[ -C%
\frac{\left( aC_{2}\sqrt{n}+b\right) ^{2}}{n}\right] \geq c>0
\end{eqnarray*}%
independently of $x_{0}$ and $e$, hence the covariance matrix is
non-degenerate. (Here the effect of all previous constants are absorbed into
the last constant $c$.) By this we have shown that the invariance principle
holds for the random walk on almost all Penrose lattice and the limiting
process is a non-degenerate Brownian motion.
\end{proof}

\begin{remark}
The results presented in this paper carry over easily to other quasicrystals
which can be constructed by the projection methods similar to the one
produces the Penrose tiling. This applies to generalized Penrose tilings
(produced by $p$-grids), higher dimensional Penrose tilings and stochastic
tilings.
\end{remark}

\begin{remark}
It seems plausible that with some extra work one can show that the
covariance matrix is the identity matrix multiplied with a positive
constant. The exact value of the constant ought to be determined as well.
\end{remark}

\section{Acknowledgement}

The author expresses his sincere thanks to Domokos Sz\'{a}sz for the
inspiring question and M\'{a}rton Bal\'{a}zs for useful discussions. Thanks
are due to P\'{e}ter N\'{a}ndori for useful comments on the draft of the
paper.

Particular thanks are due to the referees for their helpful, detailed
comments and for the invested work which is far exceeding the usual
contribution.

\end{document}